\documentclass{amsart}
\usepackage{amsmath}
\usepackage{amsfonts}
\usepackage{amssymb}
\usepackage{graphicx}%
\usepackage{mathrsfs}

\usepackage{amsthm}         
\usepackage{hyperref} 
\usepackage{url}
\usepackage{multicol}

\usepackage{tikz,comment}
\usetikzlibrary{arrows,calc,intersections,quotes,angles}
\tikzset{>=stealth'}

\usepackage{times}
\usepackage{bbold}
\usepackage{stmaryrd}
\usepackage{array}
\usepackage{dsfont}
\usepackage[T1]{fontenc}

\newtheorem{theorem}{Theorem}

\newtheorem*{GGR+theorem}{GGR+ Theorem}

\newtheorem{lemma}[theorem]{Lemma}

\newtheorem*{PWC}{The Perimeter Winternitz Conjecture}

\begin{document}
\title[The Perimeter Winternitz Theorem in a Triangle]{The Perimeter Winternitz Theorem in a Triangle}

\author{Allan Berele}
\address{Department of Mathematics, DePaul University, Chicago, IL 60614}
\email{aberele@depaul.edu}

\author{Stefan Catoiu}
\address{Department of Mathematics, DePaul University, Chicago, IL 60614}
\email{scatoiu@depaul.edu}

\date{March 24, 2026}
\subjclass[2010]{Primary: 52A10; 52A38; Secondary: 51M04; 51M25; 51N20.}
\keywords{Winternitz theorem.}

\begin{abstract} A variable line through the centroid $G$ of a triangle divides the triangle into~two parts each of whose lengths as a fraction of the perimeter fills a closed interval $[m,1-m]$, with~$m$ between~0 and~1/2. We show that the range of $m$ taken over all triangles is the interval $(3/10,4/9]$, with 3/10 approached by scales of the triangles approaching the~5-4-1 triangle and their mid-size medians, and 4/9 attained by the equilateral triangles and the lines through~$G$ parallel to the sides.
This result is the perimeter version of the classical Winternitz theorem for a triangle, asserting that, in the case of area-ratio instead of perimeter-ratio, $m=4/9$, and this is attained by all triangles and their lines through $G$ and parallel to~the~sides..\vspace{-.1in}
\end{abstract}
\maketitle

\noindent
The Winternitz theorem asserts that each of the two regions determined by a variable line through the centroid $G$ of a planar convex set covers at least~4/9 of the total area, and that the bound 4/9 is precisely attained when the convex set is a triangle and the line through $G$ is parallel to a side. See Blaschke \cite{B}.

In their book \cite{YB}, first published in 1951, Yaglom and Boltyanskii suggested the perimeter version of the Winternitz theorem as a good project to work on. However, perhaps due to its being a very hard question, nobody has ever attempted to attack this problem since then.

The goal of this paper is to prove the perimeter-analogue of the Winternitz theorem for a triangle, and we shall see that this is an interesting problem as well.
\begin{figure}[b]
\begin{center}\vspace{-.15in}
\begin{tikzpicture}[scale=2.3]\small
\path [fill=gray!30] (.39,0) -- (2,0) -- (1.42,1.00) -- cycle;
\path [draw,thick] (0,0) -- (2,0) -- (1,1.73) -- cycle;
\node [above] at (1,1.73) {$A$};
\node [below] at (0,0) {$B$};
\node [below] at (2,0) {$C$};
\draw [fill=black] (1,.5766) circle [radius=.02];\node [left] at (1,.5766) {$G$};
\draw (.333,.5766) -- (1.667,.5766);
\draw (.66,0) -- (1.33,1.1533);
\draw (1.33,0) -- (.66,1.1533);
\draw (.333,.5766) -- (.667,0);
\draw (1.333,0) -- (1.667,.5766);
\draw (.66,1.1533) -- (1.33,1.1533);
\draw [dashed] (.25,-.135) -- (1.55,1.115);
\node [below] at (.39,0) {$M$};\node [right] at (1.42,1.00) {$N$};
\node [below] at (.667,0) {$M_1$};\node [above right] at (1.333,1.1533) {$N_1$};
\draw [fill=black] (.57,.17) circle [radius=.02];\node [above] at (.58,.17) {$P$}; 
\end{tikzpicture}
\qquad
\begin{tikzpicture}[scale=2.3]\small
\draw [color=gray,line width=3pt] (.39,0) -- (2,0) -- (1.42,1.00);
\path [draw,thick] (0,0) -- (2,0) -- (1,1.73) -- cycle;
\node [above] at (1,1.73) {$A$};
\node [below] at (0,0) {$B$};
\node [below] at (2,0) {$C$};
\draw [fill=black] (1,.5766) circle [radius=.02];\node [left] at (1,.5766) {$G$};
\draw (.333,.5766) -- (1.667,.5766);
\draw (.66,0) -- (1.33,1.1533);
\draw (1.33,0) -- (.66,1.1533);
\draw (.333,.5766) -- (.667,0);
\draw (1.333,0) -- (1.667,.5766);
\draw (.66,1.1533) -- (1.33,1.1533);
\draw [dashed] (.25,-.135) -- (1.55,1.115);
\node [below] at (.39,0) {$M$};\node [right] at (1.42,1.00) {$N$};
\node [below] at (.667,0) {$M_1$};\node [above right] at (1.333,1.1533) {$N_1$};
\draw [fill=black] (.57,.17) circle [radius=.02];\node [above] at (.58,.17) {$P$}; 
\end{tikzpicture}\vspace{-.1in}
\end{center}
\caption{Proof of the Winternitz theorem for the equilateral triangle and its perimeter analogue.}\label{fig1.1}\vspace{-.15in}
\end{figure}
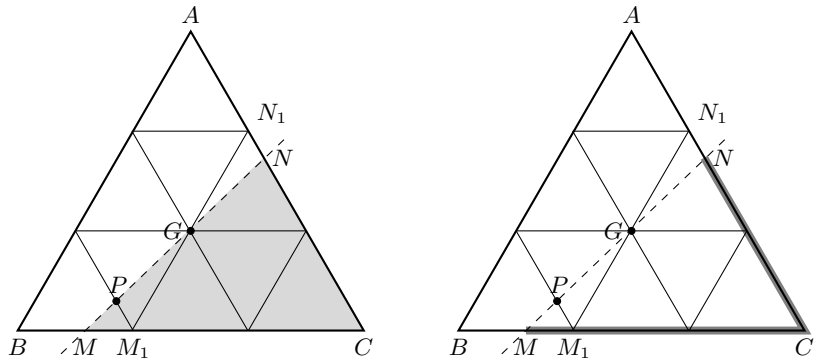
\subsection*{The case of an equilateral triangle.} The classical proof of the Winternitz theorem has two parts, the proof of the theorem for all triangles and the reduction from convex sets to triangles. The Winternitz theorem for triangles has the following simplified form: in any triangle, the smallest part of the area cut by a variable line through the centroid is 4/9 of the total area, and this is attained precisely when the line is parallel to a side. Affine invariance makes the proof of the Winternitz theorem for all triangles equivalent to the proof for an equilateral triangle, illustrated in~Figure~\ref{fig1.1}~(left), where the triangle is divided into nine smaller congruent triangles by parallel lines to the sides. Suppose the equilateral triangle $ABC$ has side $a$ and area~$\sigma =[ABC]$. Based on the symmetries of the equilateral triangle and following notation from the figure, without loss of generality, we may assume that the variable line $MN$ through the centroid~$G$ crosses the perimeter as shown in the figure. To prove the theorem, it suffices to prove that~$\tfrac 49\sigma\leq [CMN]\leq \tfrac 12\sigma $, or $[CM_1N_1]\leq [CMN]\leq [CBB']$, where $B'$ (not shown) is the midpoint of $AC$. We only prove the first of these two inequalities; the second has a similar proof. Indeed,
\[
\begin{aligned}
{[CMN]-[CM_1N_1]}&=\left([GMM_1]+[GNCM_1]\right)-\left([GNN_1]+[GNCM_1]\right)\\
&=[GMM_1]-[GNN_1]\\
&=\left([PMM_1]+[GPM_1]\right)-[GNN_1]\\
&=[PMM_1]\geq 0,
\end{aligned}
\]
where the last equality follows from the symmetry of triangles $GPM_1$ and $GNN_1$ relative to $G$. The inequality is equality precisely when $M=M_1$ and $N=N_1$.

The perimeter analogue of the Winternitz theorem for an equilateral triangle asserts that the length of the shortest perimeter part cut from such a triangle by a variable line through the centroid is 4/9 of the perimeter, and 4/9 is attained precisely when the line is parallel to a side. Its analogous proof is illustrated in Figure~\ref{fig1.1}~(right), where it suffices to show that $\tfrac 49\cdot (3a)\leq CM+CN\leq \tfrac 12\cdot (3a)$, that is,
$CM_1+CN_1\leq CM+CN\leq CB+CB'$. We only prove the first of the two similar inequalities. For this, we compute
\[
\begin{aligned}
{(CM\hspace{-.03in}+\hspace{-.03in}CN)-(CM_1\hspace{-.03in}+\hspace{-.03in}CN_1)}&=(MM_1\hspace{-.03in}+\hspace{-.03in}CM_1\hspace{-.03in}+\hspace{-.03in}CN)-(CM_1\hspace{-.03in}+\hspace{-.03in}CN\hspace{-.03in}+\hspace{-.03in}NN_1)\\
&=MM_1\hspace{-.03in}-\hspace{-.03in}NN_1=MM_1\hspace{-.03in}-\hspace{-.03in}PM_1\geq 0,
\end{aligned}
\]
where the inequality is equivalent in triangle $PMM_1$ to $\angle P-\angle M\geq 0$, and this follows from $\angle M\leq 60^{\circ }$ and $\angle M_1=60^{\circ }$. The same inequality becomes equality when $\angle M=60^{\circ }$, i.e., when $M=M_1$ and $N=N_1$.

In absence of the affine transformation in the perimeter case, the above perimeter-analogue of the Winternitz theorem for the equilateral triangle does not easily extend to a perimeter-analogue of the Winternitz theorem for all triangles. Therefore, new ideas are required. In particular, the quest to proving such an extension fills up the goal of the entire article.\vspace{-.15in}
\[
\ast\quad\ast\quad\ast\vspace{-.05in}
\]%
Our main result is the following extension to all triangles of the above perimeter-analogue of the Winternitz theorem for the equilateral triangle.

\begin{theorem}[The Perimeter Winternitz Theorem in Every Triangle]\label{T1} In every triangle, the shortest perimeter part cut by
a variable line through the centroid covers between~3/10 and~4/9 of the perimeter. 

The bound~3/10 is approached by scales of triangles approaching the~5-4-1 degenerated triangle and their medians corresponding to the middle sides. The bound~4/9 is attained by the equilateral triangles and their lines through the centroid and parallel to the sides. See~Figure~\ref{fig1}.
\end{theorem}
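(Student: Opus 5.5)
Fix a triangle $ABC$ with sides $a,b,c$, perimeter $p=a+b+c$ and centroid $G$. For a line $\ell$ through $G$, let $f(\ell)$ be the length of the shorter of the two boundary arcs cut off by $\ell$, divided by $p$. Then $m(ABC)=\min_\ell f(\ell)$, and the theorem asserts that $m$ ranges over $(3/10,4/9]$.

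\textbf{Parametrization.} A line through $G$ always separates one vertex, say $C$, from the other two. Write its intersections with $CB$ and $CA$ as $CM=x\,a$ and $CN=y\,b$. The centroid condition, in barycentric form, is
\[
\frac1x+\frac1y=3,\qquad x,y\in[\tfrac12,1].
\]
The arc containing $C$ then has length $xa+yb$. As $\ell$ rotates, the separated vertex changes only when $\ell$ passes through a vertex, i.e.\ along a median. So the six median-endpoint configurations divide the rotation into three arcs, one per vertex.

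\textbf{Step 1: where the minimum sits.} On the arc for vertex $C$, the function $\phi(x)=xa+y(x)b$ is convex in $x$, since $y=x/(3x-1)$ is convex. It is therefore maximized at the endpoints $x=\tfrac12$ or $x=1$, which are the medians. The arc on the other side has length $p-\phi$, and so it is minimized there as well. Hence the shorter part $\min(\phi,p-\phi)$ attains its minimum over $\ell$ either at a median or at an interior critical point of $\phi$. The critical point is
\[
a=b\,\frac{1}{(3x-1)^2},
\]
which gives $x=(1+\sqrt{b/a})/3$ and $y=(1+\sqrt{a/b})/3$. At that point
\[
\phi_{\min}=\frac{(\sqrt a+\sqrt b)^2}{3}.
\]
This yields the closed formula
\[
m(ABC)\,p=\min\Bigl(\ \min_{\text{pairs}}\tfrac13(\sqrt a+\sqrt b)^2\ \ (\text{valid when the critical }x\in[\tfrac12,1]),\ \ \min_{\text{medians}}\text{shorter arc}\Bigr).
\]
The critical point is admissible iff $\tfrac14\le b/a\le4$. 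The median through $A$ splits the perimeter into the two arcs $c+a/2$ and $b+a/2$, so its shorter arc is $\min(b,c)+a/2$.

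\textbf{Step 2: the upper bound $4/9$.} The line through $G$ parallel to $AB$ corresponds to $x=y=2/3$, so $m\,p\le \tfrac23(a+b)$. The same holds for each pair of sides. Taking the pair that excludes the longest side gives $\tfrac23(p-\max)\le\tfrac23\cdot\tfrac23 p=\tfrac49p$. Equality forces the longest side to equal $p/3$, i.e.\ the triangle is equilateral. Combined with the opening computation for the equilateral case, this settles $\max m=4/9$.

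\textbf{Step 3: the lower bound $3/10$ (main obstacle).} I must show that $m\,p>\tfrac3{10}p$ for every nondegenerate triangle, and that the infimum is approached near $5$-$4$-$1$. Normalize $a\ge b\ge c$ with $p=1$. The median to $b$ has shorter arc $c+b/2$. At $(5,4,1)/10$ this equals $(1+2)/10=3/10$, which explains the claimed extremal configuration.

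The plan is to prove that every candidate in the formula is at least $3/10$ on the region $\{a\ge b\ge c,\ a<b+c\}$. The median candidates are linear, namely $c+b/2$, $c+a/2$ and $b+a/2$. The critical candidates are $\tfrac13(\sqrt a+\sqrt b)^2\ge \tfrac13(a+b)$ together with their analogues for the other two pairs.

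The delicate point is that the linear candidate $c+b/2$ can be smaller than $3/10$: for instance $c\to0$, $b$ small. I must therefore show that this cannot happen, using the triangle inequality $a<b+c$ together with $a+b+c=1$. Setting $a=b+c-\epsilon$ gives $b=(1+\epsilon)/2-c$, so
\[
c+\frac b2=\frac{1+\epsilon}{4}+\frac c2.
\]
At $c=0$ this is about $1/4<3/10$. But in that degenerate $(1/2,1/2,0)$ region, is the median through $B$ (the one to side $b$) really the minimizer? For the line's arc I must use the correct configuration, since the two arcs of a median depend on which vertex it passes through.

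So the real work is careful bookkeeping. For each of the three medians and each admissible critical line, I will compute the true shorter-arc length as a function on the simplex. Then I minimize the lower envelope, which is piecewise linear or concave in the sides. A concave function attains its minimum at extreme points of the polygonal region, so the minimum lands on the degenerate boundary $a=b+c$. I then check the vertices of the region where the active pieces change; $(5,4,1)$ should be such a point, where two median values tie at $3/10$.

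I expect this case analysis — identifying exactly which candidate is active where, and verifying the value $3/10$ at the critical vertex while showing it is not attained in the interior — to be the main difficulty.
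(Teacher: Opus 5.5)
Your Steps 1 and 2 are correct. Step 1 recovers the paper's candidate list: the W-line value $\tfrac13(\sqrt a+\sqrt b)^2$ when $\tfrac14\le b/a\le 4$, and medians otherwise. Step 2 is a more direct route to the $4/9$ bound than the paper's Theorem~\ref{T2}(iii), because you only need to exhibit the parallel to the longest side rather than identify $w$ exactly.

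The gap is Step 3, and it cannot be closed: the inequality $m>3/10$ is false. You computed the counterexample yourself and then set it aside with a logical slip. Since $m$ is a minimum over all lines through $G$, it is irrelevant whether the median through $B$ is ``really the minimizer''. Let $B'$ be the midpoint of $CA$. The median $BB'$ passes through $G$ and cuts off the arc $BA+AB'=c+b/2$, so $m\le (c+b/2)/p$ for every triangle. For the isosceles triangle $a=b=8$, $c=1$ this gives $m\le 5/17<3/10$. For $a=b=n$, $c=1$ the ratio $(1+n/2)/(2n+1)$ tends to $1/4$. So no amount of case bookkeeping will complete Step 3. The right conclusion is that the statement as written fails.

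The paper's proof breaks at the same place. Among the five corners in Figure~\ref{fig4}, the corner $(c,b)=(0,\tfrac12)$ lies in the subregion $b>4c$, where $F=c+b/2=\tfrac14<\tfrac3{10}$, and the paper overlooks this value.

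Your concavity idea, applied consistently, gives the correct answer. For each vertex and each fixed barycentric parameter $(x,y)$ with $1/x+1/y=3$, the arc length is linear in $(a,b,c)$, and this parameter set does not depend on the triangle. So $w$ is the minimum of a fixed family of linear functions, hence concave on the closed region $\{a\ge b\ge c\ge 0,\ a\le b+c,\ a+b+c=1\}$. Its minimum therefore occurs at an extreme point. The extreme points are $(\tfrac13,\tfrac13,\tfrac13)$, $(\tfrac12,\tfrac14,\tfrac14)$ and $(\tfrac12,\tfrac12,0)$, with $w$-values $\tfrac49$, $\tfrac13$ and $\tfrac14$. The point $(5,4,1)/10$ is not an extreme point, since it is interior to the edge $a=b+c$. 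So your expectation that the minimum sits there contradicts your own method.

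The true range is $(\tfrac14,\tfrac49]$. The infimum $\tfrac14$ is approached by thin isosceles triangles together with the medians to their long sides. Strictness follows from the paper's formula $w=w_A$ combined with $p<2(b+c)$:
\begin{itemize}
\item if $b>4c$, then $w/p=(c+b/2)/p>\tfrac14$;
\item if $c\le b\le 4c$, then $w/p=\tfrac13(\sqrt b+\sqrt c)^2/p>\tfrac3{10}$.
\end{itemize}
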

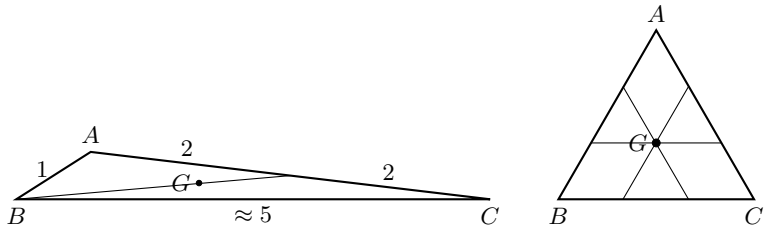
\begin{figure}[t]
\begin{center}\vspace{-.15in}
\begin{tikzpicture}[scale=1.8]\small
\path [draw,thick] (0,0) -- (3.5,0) -- (.55,.35) -- cycle;
\node [above] at (.55,.35) {$A$};
\node [below] at (0,0) {$B$};
\node [below] at (3.5,0) {$C$};
\draw [fill=black] (1.35,.12) circle [radius=.02];\node [left] at (1.35,.12) {$G$};
\node [left] at (.3,.22) {$1$};\node [above] at (1.26,.26) {$2$};
\node [above] at (2.75,.08) {$2$};\node [below] at (1.75,.01) {$\approx 5$};
\draw (0,0) -- (2.025,.175);
\end{tikzpicture}
\quad
\begin{tikzpicture}[scale=1.3]\small
\path [draw,thick] (0,0) -- (2,0) -- (1,1.73) -- cycle;
\node [above] at (1,1.73) {$A$};
\node [below] at (0,0) {$B$};
\node [below] at (2,0) {$C$};
\draw [fill=black] (1,.5766) circle [radius=.04];\node [left] at (1,.5766) {$G$};
\draw (.333,.5766) -- (1.667,.5766);
\draw (.66,0) -- (1.33,1.1533);
\draw (1.33,0) -- (.66,1.1533);
\end{tikzpicture}\vspace{-.1in}
\end{center}
\caption{The extremal triangles in Theorem~\ref{T1} together with all their chords through the centroid~$G$ that respectively separate approximately 3/10 and exactly 4/9 of the perimeter.}\label{fig1}\vspace{-.15in}
\end{figure}

Note that unlike the area case of the Witernitz theorem, where $m=4/9$ for all triangles, in the perimeter case of Theorem~\ref{T1}, $m$ ranges over an entire interval whose one endpoint is $4/9$. In particular, $m$ depends on the shape of the triangle and this explains the difficulty in extending the above proof of the perimeter Winternitz theorem for an equilateral triangle to a proof for all triangles.

The proof of Theorem~\ref{T1} is given in the last section and this is not quite the extension to all triangles of its above proof for an equilateral triangle. The proof has two parts. First, in~Theorem~\ref{T2} we compute the expression for the minimum part of the perimeter cut by a variable line through the centroid of a triangle in terms of the side lengths. Then, use the multivariable calculus method to determine the range of this expression as a fraction of the perimeter.

Let $MN$ be a variable transversal through the centroid $G$ of a triangle $ABC$ with sides $a=BC$, $b=AC$, $c=AB$ and semiperimeter $s$. If~$\theta $ is the inclination angle of the line $MN$, then the area of each of the two regions determined by~$MN$ on~$ABC$, as a fraction of the total area, is a continuous function of $\theta $, hence its range is a closed interval $[m,1-m]$, where~$m$ is between 0 and 1/2. The classical Winternitz theorem asserts that $m=4/9$.
In this article, $w$ is the minimum length of either of the two pieces cut from the perimeter by the same variable line, and $m=w/(2s)$ is the fraction of the perimeter represented by $w$.
\[
\ast\quad\ast\quad\ast
\]
In his article \cite{N}, published in 1945, Neumann proved two theorems. The first is a reproof of the Winternitz theorem, apparently unaware of Winternitz.  The second is the following generalization of the Winternitz theorem for continuous mass distributions: for every compact convex set in the plane and every continuous mass distribution, there is a point $P$ with the property that every line through $P$ divides the set into two parts, each of which covering at least 1/3 of the total mass of the set. In addition, the bound of~1/3 is optimal. Both the Neumann and Winternitz theorems were extended to~$n$ dimensions by Gr\"unbaum in \cite{G} in 1960, with 1/3 replaced by~$1/(n+1)$ and~4/9 replaced by $(n/(n+1))^n$. 
Neumann noted that, in the case when the mass is equally distributed along the perimeter, one can do better than 1/3 for planar convex sets, and conjectured without support that the best bound in this case would be~$(3-\sqrt{5})/2\approx .382$.~To the best of our knowledge, Neumann's conjecture is still open today, while its extension to $n$ dimensions, the Gr\"unbaum problem (which does not have a conjectured value yet), is one of the most outstanding open problems in convex geometry. After Gr\"unbaum, most of the subsequent work on the subject has been at the upper (Winternitz) end of his two results, an area referred to generically as the Gr\"unbaum inequality. For example, the Winternitz theorem was recently generalized by Shintar and Yaskin in \cite{SY}, and by Bose, Carmi, Hurtado, and Morin in~\cite{BCHM}.

Back to the main result here, our constant of 3/10 being smaller than 1/3 shows that the centroid of a convex set in the plane cannot play the role of the point $P$ in Neumann's theorem and conjecture. This is probably the reason why our natural perimeter analogue of the Winternitz theorem for triangles remained undetected during the century old history of the subject.
\section{Perimeter Winternitz lines.}\label{S1}
Given an angle with vertex at $A$ and an interior point $P$, a line through $P$ that intersects the sides of the angle at $M$ and $N$ is called a \emph{W-line} (relative to angle or vertex $A$) if it minimizes $AM+AN$. Note that the value $AM+AN$ decreases from infinity, when $MN$ is parallel to one of the sides, to the minimum attained by the W-line, and then increases back to infinity, when~$MN$ is parallel to the other side of the angle.

The following lemma on W-lines will be crucial in the proof of all remaining results.

\begin{lemma}\label{L8}
\begin{enumerate}
\item[(i)\,] Given a triangle $ABC$ and a point $P$ inside it, a line through $P$ maximizing/minimizing the perimeter of one of the two pieces cut by it is either a cevian or a W-line with respect to one of the angles.
\item[(ii)] Given an angle $\angle A$, let $\vec{u}$ and $\vec{v}$ be unit vectors along the sides, and let $P=x\vec{u}+y\vec{v}$ be an interior point. Then the W-line through $P$ intersects the sides of the angle at $(x+\sqrt{xy})\vec{u}$ and $(y+\sqrt{xy})\vec{v}$. In particular, the perimeter cut off from the angle by the W-line is  $x+y+2\sqrt{xy}=(\sqrt{x}+\sqrt{y})^2$.
\end{enumerate}
\end{lemma}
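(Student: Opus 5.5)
Part (i) will be handled by parametrizing the lines through $P$ by their inclination $\theta$ and splitting into intervals according to which pair of sides the line meets. As $\theta$ varies, the line rotates through $P$, and the two intersection points with the boundary move continuously. They change sides only when the line passes through a vertex, and those lines are exactly the three cevians through $P$. On an open interval of $\theta$ where the line meets, say, sides $AB$ and $AC$ at $M$ and $N$, one of the two pieces is the triangle $AMN$. Its perimeter portion is $AM+AN$, and the other piece has perimeter portion $2s-AM-AN$. So on that interval, maximizing or minimizing the perimeter of either piece amounts to extremizing $AM+AN$. By the remark preceding the lemma, $AM+AN$ as a function of the line is strictly decreasing and then strictly increasing, with its only interior critical point at the W-line. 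Hence on each interval the only candidates for extrema are the endpoints, which are cevians, and the W-line relative to that vertex, if it lies in the interval. Since the parameter space is compact and the perimeter function is continuous on it, the global extrema exist and are among these candidates.

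The remark itself deserves a short justification, and I will fold it into the computation for (ii). Write $M=t\vec u$ and $N=r\vec v$ with $t,r>0$. The points $M$, $P$, $N$ are collinear exactly when $P$ is an affine combination of $M$ and $N$. Because $\vec u$ and $\vec v$ are independent, this gives the condition $x/t+y/r=1$. We therefore minimize $t+r$ subject to $x/t+y/r=1$ with $t>x$ and $r>y$. Substituting $t=x+\alpha$ gives $r=y+xy/\alpha$, so $t+r=x+y+\alpha+xy/\alpha$. This tends to infinity as $\alpha\to0^+$ or $\alpha\to\infty$, which are the two parallel positions. It is strictly convex in $\alpha$, so it has a unique minimum, attained at $\alpha=\sqrt{xy}$ by AM--GM. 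This yields $M=(x+\sqrt{xy})\vec u$, $N=(y+\sqrt{xy})\vec v$, and the minimal value $x+y+2\sqrt{xy}=(\sqrt x+\sqrt y)^2$. It also establishes the unimodality used in (i).

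The main point of care is in (i): making sure the piece whose perimeter is being extremized is correctly identified on each interval. On a given interval, that piece is either the corner triangle at the vertex opposite the side the line misses, or its complement. In both cases the relevant quantity is an affine function of $AM+AN$, so the critical lines are the same. Degenerate intervals, where the W-line falls outside the allowed range, simply contribute only cevian endpoints.
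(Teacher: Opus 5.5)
Your proof is correct. It differs from the paper's in technique for (ii) and in completeness for (i).

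For (ii), the paper forms the Lagrange function $s+t+\lambda(x/s+y/t-1)$ and solves the stationarity equations to get $s^2/t^2=x/y$, hence the stated intersection points. That locates the critical point quickly, but it does not by itself show the point is the unique minimum. For that the paper relies on the informal remark before the lemma: $AM+AN$ falls from infinity to the W-line value and then rises back to infinity. Your substitution $t=x+\alpha$ reduces the objective to $x+y+\alpha+xy/\alpha$. This gives the same point by AM--GM and also proves that remark, through strict convexity in $\alpha$.

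For (i), the paper only says the claim is clear from the definition of a W-line. Your argument supplies what that sentence implicitly appeals to. You split the lines through $P$ into three arcs bounded by the cevians, and on each arc the relevant piece is a corner triangle or its complement, so its perimeter is an affine function of $AM+AN$.

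Your version also has a useful by-product. On each arc, strict convexity rules out a W-line as a maximizer of $AM+AN$, so the maximum always sits at a cevian endpoint. The proof of Lemma~\ref{L3} asserts exactly this, attributing it to Lemma~\ref{L8}(i), although Lemma~\ref{L8}(i) as stated does not contain it.

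You leave one small step implicit. On each arc, the intersection parameter $t$ (equivalently $\alpha$) must be a strictly monotone continuous function of the inclination $\theta$; this is what transfers unimodality in $\alpha$ to unimodality in $\theta$. It is standard and easy to supply.
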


\begin{proof}
(i) This part is clear from the definition of a W-line. (ii) A line through $P$ intersecting the sides of the angle at $s\vec{u}$ and $t\vec{v}$ has equation $x/s+y/t=1$. In order to find the extreme values of $s+t$ we use Lagrange multipliers, and set\vspace{-.05in}
\[
F(s,t,\lambda )=s+t+\lambda \left(\frac xs+\frac yt-1\right).\vspace{-.05in}
\]
Then $F_s=1-{\lambda x}/{s^2}$ and $F_t=1-{\lambda y}/{t^2}$. Since these are zero, ${s^2}/{t^2}=x/y$. Combining with $x/s+y/t=1$ gives $s=x+\sqrt{xy}$ and $t=y+\sqrt{xy}$.
\end{proof}

\begin{figure}
\begin{center}
\begin{tikzpicture}[scale=1]\small
\path [draw,thick] (0,0) -- (5,0) -- (1.8,2.4) -- cycle;
\node [above] at (1.8,2.4) {$A$};
\node [below] at (0,0) {$B$};
\node [below] at (5,0) {$C$};
\draw [fill=black] (2.266,.8) circle [radius=.04];\node [above] at (2.26,.82) {$P$};
\draw (.5072,.676) -- (3.79,.907);
\draw (1.374,1.832) -- (2.96,0);
\draw (1.843,0) -- (2.74,1.6944);
\node [left] at (.5072,.676) {$M$};\node [right] at (3.79,.907) {$N$};
\end{tikzpicture}
\quad
\begin{tikzpicture}[scale=1]\small
\path [draw,thick] (0,0) -- (5,0) -- (1.8,2.4) -- cycle;
\node [above] at (1.8,2.4) {$A$};
\node [below] at (0,0) {$B$};
\node [below] at (5,0) {$C$};
\draw [fill=black] (2.266,.8) circle [radius=.04];\node [above] at (2.266,.8) {$P$};
\draw [dashed] (.6,.8) -- (3.933,.8);
\draw [dashed] (1.66,0) -- (2.867,1.6);
\draw [dashed] (3.33,0) -- (1.2,1.6);
\node [above right] at (2.3,1.95) {$y_A$};
\node [above right] at (4.42,.36) {$x_C$};
\node [above left] at (1.55,1.95) {$x_A$};
\node [above left] at (.36,.32) {$y_B$};
\node [below] at (.833,0) {$x_B$};
\node [below] at (4.166,0) {$y_C$};
\end{tikzpicture}\vspace{-.1in}
\end{center}
\caption{(Left) A triangle with the three perimeter W-lines relative to vertices passing through an interior point $P$. (Right) The same triangle with the parallels from $P$ to the sides.}\label{fig2}\vspace{-.1in}
\end{figure}
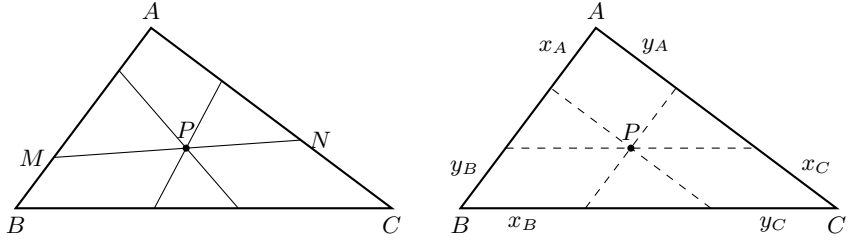

Given an interior point $P$ of a triangle $ABC$, we let $w_A$ be the minimum sum $AM+AN$ for a variable chord $MN$ through $P$, with $M$ on $AB$ and $N$ on $AC$. By Lemma~\ref{L8}, this is achieved when the chord $MN$ is either a W-line through $P$ relative to $A$, as shown in~Figure~\ref{fig2}~(left), or a cevian. Similar definitions are given for $w_B$ and~$w_C$. Let $w=w(P)$ be the length of the smallest part of the perimeter determined by a variable line through $P$.

The first application of Lemma~\ref{L8} relates the numbers $w$, $w_A$, $w_B$, $w_C$ associated to  the point $P$.

\begin{lemma}\label{L3} $w=\min \{w_A,w_B,w_C\}$.
\end{lemma}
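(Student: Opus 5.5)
We prove the two inequalities $w\le \min\{w_A,w_B,w_C\}$ and $w\ge \min\{w_A,w_B,w_C\}$ separately, by partitioning the lines through $P$ according to which vertex they cut off. Every line through the interior point $P$ meets the boundary of $ABC$ in exactly two points, and these lie on two different sides (or one of them is a vertex). Thus such a line separates one vertex, say $A$, from the other two, or it passes through a vertex, in which case it is a cevian. In the first case the line meets $AB$ at $M$ and $AC$ at $N$. The two perimeter pieces then have lengths $AM+AN$ and $2s-(AM+AN)$, where $2s$ is the perimeter. The lines separating $A$ are exactly the chords $MN$ through $P$ with $M\in AB$, $N\in AC$, the family over which $w_A$ is defined. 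We include in this family the limiting cevians through $B$ and $C$, where $M=B$ or $N=C$.

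For $w\le\min\{w_A,w_B,w_C\}$: the minimum $w_A$ is attained by some chord $MN$ through $P$ (a W-line or a cevian, by Lemma~\ref{L8}(i)). One of the two perimeter pieces of that line has length $AM+AN=w_A$. Since $w$ is the minimum over all lines through $P$ of the smaller piece, $w\le w_A$. The same argument gives $w\le w_B$ and $w\le w_C$.

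For $w\ge\min\{w_A,w_B,w_C\}$: take a line through $P$ that realizes $w$. Such a line exists because the length of the shorter piece is a continuous function of the inclination angle $\theta$ on a compact parameter set. This line separates some vertex, say $A$, possibly as a cevian, and its pieces are $AM+AN$ and $2s-(AM+AN)$. If the smaller piece is $AM+AN$, then $w=AM+AN\ge w_A$ and we are done. The main obstacle is the other case, where the smaller piece is the complementary one, $BM+BC+CN$. There we rewrite the same line from the viewpoint of the other vertices. Going around the perimeter, the piece $BM+BC+CN$ is the piece containing both $B$ and $C$, so it is not itself a quantity $BM'+BN'$ for this line.

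To handle this we use a monotonicity argument instead. Rotate the line about $P$, keeping its endpoints on $AB$ and $AC$. The quantity $AM+AN$ is unimodal in $\theta$: it decreases to the W-line value and then increases, as noted after the definition of W-lines. Hence on the interval of angles for which the line separates $A$, the complementary length $2s-(AM+AN)$ is minimized at an endpoint of that interval, that is, at a cevian through $B$ or through $C$. Suppose, say, it is the cevian through $C$, meeting $AB$ at $M_0$. That cevian also belongs to the family for vertex $B$, and its piece containing $B$ is $BM_0+BC$, which is at least $w_B$. So $w\ge\min\{w_B,w_C\}$. The same conclusion holds when the endpoint is the cevian through $B$. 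The remaining check is that the endpoints of the parameter interval are genuinely cevians and that the degenerate case $M=A$ or $N=A$ cannot occur for an interior point $P$.

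Combining the two inequalities gives $w=\min\{w_A,w_B,w_C\}$.
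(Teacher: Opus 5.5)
Your proof is correct and follows the paper's argument: $w\le\min\{w_A,w_B,w_C\}$ by inclusion of the defining sets, and for the reverse you split on whether the shorter piece is $AM+AN$ or its complement, showing in the latter case that a cevian through $C$ (resp.\ $B$) does at least as well and lies in the family defining $w_B$ (resp.\ $w_C$). Your unimodality argument is an explicit form of the paper's remark that the maximum of $AM+AN$ cannot occur at a W-line (Lemma~\ref{L8}(i)), and the endpoint check you flag is routine.
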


\begin{proof} That $w\leq \min \{w_A,w_B,w_C\}$ follows from
the defining set for $w$ containing the union of the defining sets for $w_A$, $w_B$, and $w_C$. Without loss of generality, suppose $w$ is the shortest part of the perimeter determined by the chord $M_0N_0$ through $P$, with $M_0$ on $AB$ and $N_0$ on $AC$.
There are two cases to consider. If $w=AM_0+AN_0$, then $w\geq w_A\geq \min \{w_A,w_B,w_C\}\geq w$, and the equality of the first and last term forces the result. Otherwise, $w=M_0B+BC+CN_0$, hence $w= \min \{MB+BC+CN\mid M\in AB,N\in AC, P\in MN\}=2s-\max \{AM+AN\mid M\in AB,N\in AC, P\in MN\}$. Since the maximum (hence the previous minimum) cannot be attained by a W-line, by Lemma~\ref{L8}(i), they are attained when~$M_0N_0$ is a cevian from $B$ or $C$. Without loss of generality, assume that $M_0N_0$ is the cevian $CC'$. Then $w=C'B+BC$ belongs to the defining set of $w_B$, hence~$w\geq w_B\geq \min \{w_A,w_B,w_C\}\geq w$ and, again, the result is forced by the equality of the first and last terms.
\end{proof}

\section{Proof of Theorem~\ref{T1}.}\label{S2}
The parallels from the centroid to the sides of a triangle divide the sides into thirds, so that with the notation from Figure~\ref{fig2}~(right), when $P=G$,~$x_B=y_C=a/3$, $x_C=y_A=b/3$, and $x_A=y_B=c/3$. Then, by~Lemma~\ref{L8}(ii), the W-lines relative to the angles at $A,B,C$ respectively separate the finite lengths $b/3+c/3+2\sqrt{bc}/3$, $a/3+c/3+2\sqrt{ac}/3$, $a/3+b/3+2\sqrt{ab}/3$ from the infinite perimeters of the extended angles of the triangle.

The second application of Lemma~\ref{L8} is the following perimeter Winternitz theorem in a specific triangle. 

\begin{theorem}[The Perimeter Winternitz Theorem in a Specific Triangle]\label{T2} With the above notation, every triangle $ABC$ satisfies the following properties:
\begin{enumerate}
\item[(i)\;] If $a\geq b\geq c$ then\vspace{-.1in}
\[
\begin{aligned}
w_A&=\begin{cases}
b/3+c/3+2\sqrt{bc}/3,&\text{for }b\leq 4c,\\
c+b/2,&\text{for }b>4c,
\end{cases}\\
w_B&=\begin{cases}
a/3+c/3+2\sqrt{ac}/3,&\text{for }a\leq 4c,\\
c+a/2,&\text{for }a>4c,\vspace{-.05in}
\end{cases}\\
w_C&=a/3+b/3+2\sqrt{ab}/3.
\end{aligned}
\]
\item[(ii)\,] If $a\geq b\geq c$, then $w_A\leq w_B\leq w_C$. In particular, $w=w_A$.
\item[(iii)] $w\leq 4(a+b+c)/9$, with equality if and only if $a=b=c$.
\end{enumerate}
\end{theorem}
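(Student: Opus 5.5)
The plan is to compute each $w_X$ directly from Lemma~\ref{L8}, using the coordinates of $G$ described just before the theorem. We also use the fact, noted after the definition of a W-line, that along the pencil of lines through $G$ the cut-off length $AM+AN$ decreases to its unconstrained minimum and then increases. Throughout, at vertex $A$ take $\vec u$ along $AB$ (length $c$) and $\vec v$ along $AC$ (length $b$), so $G=(c/3)\vec u+(b/3)\vec v$.

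\emph{Part (i).} By Lemma~\ref{L8}(ii), the W-line at $A$ meets the rays at distances $(c+\sqrt{bc})/3$ along $AB$ and $(b+\sqrt{bc})/3$ along $AC$.
\begin{itemize}
\item The second distance is at most $b$ iff $c\le 4b$, which always holds.
\item The first is at most $c$ iff $\sqrt{bc}\le 2c$, i.e.\ $b\le 4c$.
\end{itemize}
So for $b\le 4c$ the W-line is an honest chord of the triangle, and $w_A=(\sqrt b+\sqrt c)^2/3$. For $b>4c$ the minimiser lies outside the admissible range. By the unimodality of $AM+AN$ along the pencil, the constrained minimum is then attained at the endpoint $M=B$, i.e.\ by the median from $B$, which cuts $AC$ at its midpoint. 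This gives $w_A=c+b/2$. The same argument at $B$ (sides $a$ and $c$) gives the formula for $w_B$, with threshold $a\le 4c$. At $C$ (sides $a$ and $b$) the threshold is $a\le 4b$, which always holds because $a<b+c\le 2b$; hence only the W-line case occurs.

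\emph{Part (ii).} Write $f(p,q)=(\sqrt p+\sqrt q)^2/3$, which is increasing in each variable. Whenever the median value is used, it is a member of the defining family, so it is at least the unconstrained minimum $f$.

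For $w_A\le w_B$ there are three cases:
\begin{itemize}
\item both of W-type: $f(b,c)\le f(a,c)$;
\item both of median type: $c+b/2\le c+a/2$;
\item $w_A$ of W-type and $w_B$ of median type: $f(b,c)\le f(a,c)\le c+a/2$.
\end{itemize}
The mixed case with $w_A$ of median type and $w_B$ of W-type cannot occur, since $b>4c$ forces $a>4c$.

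For $w_B\le w_C$:
\begin{itemize}
\item If $w_B$ is of W-type, then $f(a,c)\le f(a,b)$.
\item If $w_B=c+a/2$ with $c<a/4$, we use $b>a-c$ and monotonicity of $f$ to reduce to the inequality $f(a,a-c)\ge c+a/2$. With $c=ta$ and $t<1/4$ this becomes $4\sqrt{1-t}\ge 8t-1$. This holds because the left side is at least $4\sqrt{3/4}=2\sqrt3>1$, while the right side is below $1$.
\end{itemize}
Then Lemma~\ref{L3} gives $w=w_A$.

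\emph{Part (iii).} The line through $G$ parallel to $BC$ belongs to the family defining $w_A$ and cuts off $2(b+c)/3$. Hence
\[
w=w_A\le \tfrac23(b+c)\le \tfrac49(a+b+c),
\]
where the last inequality is equivalent to $b+c\le 2a$, true because $a$ is the largest side. Equality there forces $a=b=c$. Conversely, for an equilateral triangle with side $a$, part (i) gives $w=4a/3=\tfrac49\cdot 3a$.

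The main obstacle is the mixed case $w_B\le w_C$ when $w_B$ is the median value. There the triangle inequality must be used to bound $b$ from below, and a one-variable inequality must be checked. Everything else is monotonicity of $f$ together with the endpoint argument for the constrained minimum.
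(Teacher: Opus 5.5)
Your proof is correct. Part (i) follows the paper: you use unimodality along the pencil to pick out the median from $B$ directly, where the paper compares the two admissible cevians, $\min\{c+b/2,\,b+c/2\}$. Parts (ii) and (iii), however, take a genuinely different route.

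The paper works purely algebraically from the closed forms in (i):
\begin{itemize}
\item It splits (ii) into three cases according to where $4c$ sits relative to $a$ and $b$.
\item In the mixed case it proves $w_A\le w_B$ by an AM--GM chain, $\sqrt{4bc}\le\sqrt{ab}\le(a+b)/2\le 2c-b+3a/2$.
\item It proves $w_B\le w_C$ separately in its Cases 2 and 3, via crude bounds reducing to $a\le 12b/5$ and $a\le 9b/2$.
\item In (iii) it uses AM--GM when $b\le 4c$ and a separate linear inequality $8a>b+10c$ when $b>4c$.
\end{itemize}

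You instead exploit the minimizing property itself:
\begin{itemize}
\item The median from $A$ is a competitor in the unconstrained problem at $B$. Hence $c+a/2\ge f(a,c)\ge f(b,c)$ with no computation.
\item The median-type case of $w_B\le w_C$ is handled in one stroke for both of the paper's Cases 2 and 3. You use $b>a-c$ and then the one-variable inequality $4\sqrt{1-t}\ge 8t-1$.
\item In (iii), the parallel to $BC$ through $G$ is an admissible chord. This gives $w_A\le\frac23(b+c)$ uniformly, removes the case split, and makes the equality analysis immediate.
\end{itemize}

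Your version is shorter and explains why the inequalities hold: the optimal chord beats explicit competitor chords. The paper's version is more mechanical, but it needs only the formulas from (i) and never returns to the geometry.
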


\begin{proof}
(i) By Lemma~\ref{L8}, $w_A=b/3+c/3+2\sqrt{bc}/3$ when~$x_A+\sqrt{x_Ay_A}=c/3+\sqrt{bc}/3\leq c$ and~$y_A+\sqrt{x_Ay_A}=b/3+\sqrt{bc}/3\leq b$. The first inequality divided~by~$c$ is equivalent to $b/c\leq 4$, while the second divided by $b$ leads to the true inequality~$c/b\leq 4$. Thus~$w_A=b/3+c/3+2\sqrt{bc}/3$ precisely when~$b\leq 4c$.
When $b>4c$, by the same lemma, $w_A=\min \{c+b/2,b+c/2\}=c+b/2$, where~$c+b/2$ and~$b+c/2$ are the lengths of the parts of the perimeter containing $A$ respectively determined by the medians from $B$ and $C$.
This proves the expression for $w_A$. The ones for $w_B$ and $w_C$ are similar, except that the condition $a>4b$ for $w_C$ is impossible, since it implies $a>4c$ and by averaging we get~$a>2(b+c)$, contradicting the triangle inequality.

(ii) By part (i) and Lemma~\ref{L3}, the expression of $w=\min \{w_A,w_B,w_C \}$ depends on the position of $4c$ relative to $a$ and $b$. There are three possible cases to consider.

\emph{Case 1.} If $4c\geq a$, then $4c\geq b$, and so by part (i), $w_A=b/3+c/3+2\sqrt{bc}/3$ and~$w_B=a/3+c/3+2\sqrt{ac}/3$. The result in this case follows from the hypothesis that~$a\geq b\geq c$ being equivalent to $w_A\leq  w_B\leq w_C$.

\emph{Case 2.} If $a>4c\geq b$, then $w_A=b/3+c/3+2\sqrt{bc}/3$ and $w_B=c+a/2$. To prove that $w_A\leq w_B$, or $b/3+c/3+2\sqrt{bc}/3\leq c+a/2$, which is equivalent to~$\sqrt{4bc}\leq 2c-b+3a/2$, as $4c<a$, it is enough to prove that $\sqrt{ab}\leq 2c-b+3a/2$. By the means inequality, it suffices to show that $(a+b)/2\leq 2c-b+3a/2$, which leads to $3b\leq 4c+2a$. This is true, since $b\leq 4c$ and $2b\leq 2a$. The inequality $w_B\leq w_C$ is $c+a/2\leq a/3+b/3+2\sqrt{ab}/3$, which simplifies to $6c+a\leq 2b+4\sqrt{ab}$. For this, since $6c+a<3a/2+a=5a/2$ and $6b\leq 2b+4\sqrt{ab}$, it suffices to show that $5a/2\leq 6b$, or~$a\leq 12b/5$. Indeed, $a\leq b+c\leq b+b<12b/5$.

\emph{Case 3.} If $b>4c$, then also $a>4c$, hence $w_A=c+b/2$ and $w_B=c+a/2$, with the consequence that $w_A\leq w_B$. 
To prove the result, it suffices to show that $w_B\leq w_C$, which as in Case 2 is equivalent to $6c+a\leq 2b+4\sqrt{ab}$. For this, since $6c+a<3b/2+a$ and $6b\leq 2b+4\sqrt{ab}$, it suffices to show that $3b/2+a\leq 6b$, or~$a\leq 9b/2$. Indeed, $a\leq b+c\leq b+b/4<9b/2$.

(iii) Without loss of generality, we may assume that $a\geq b\geq c$. When $b\leq 4c$, by the means inequality, $w=b/3+c/3+2\sqrt{bc}/3\leq 2(b+c)/3$, with equality if and only if $b=c$. Furthermore, $2(b+c)/3\leq 4(a+b+c)/9$ is equivalent to~$b+c\leq 2a$, which is true, since $a\geq b,c$. Then the equality $w=4(a+b+c)/9$ happens precisely when $a=b=c$.
When $b>4c$, $w=c+b/2<4(a+b+c)/9$ is equivalent to $8a>b+10c$, which is true, since $a\geq b$ and $7a\geq 7b>28c>10c$.
\end{proof}

The proof of Theorem~\ref{T2}(i) shows that, when $a\geq b\geq c$, the W-line relative to $C$ is always a chord of the triangle, the W-line relative to $A$ is a chord if and only if $b\leq 4c$, and the W-line relative to $C$ is a chord if and only if $a\leq 4c$.

There are several differences between Theorem~\ref{T2} and the classical Winternitz theorem in a triangle. For example, part (ii) of the theorem shows that unlike its classical version, the fraction of the perimeter represented by $w$ is sides dependent.
Part (iii) of the same theorem shows that, unlike its classical version, where the inequality is an equality for all triangles, in the perimeter case, the equality is achieved only for the equilateral triangle.
And part (i) shows that unlike the classical Winternitz theorem, in the perimeter case, the numbers $w_A$, $w_B$, and $w_C$ are no longer equal for all triangles.

(The last statement requires more explanation. In the area case of the classical Winternitz theorem for a triangle $ABC$, $w_A$ is the minimum area of triangle $AMN$, for a variable line $MN$ through $G$, with $M$ on $AB$ and $N$ on $AC$, and similar definitions are given for $w_B$ and $w_C$. Let $\sigma $ denote the area of the triangle and let $w$ be the minimum area of one of the two parts cut from the triangle by a variable line through the centroid.
With this notation, the
classical Winternitz theorem for the concrete triangle~$ABC$ asserts that~$w=w_A=w_B=w_C=4\sigma /9$, and the three chords achieving~$w_A,w_B,w_C$ are precisely the parallels from $G$ to the sides.)

\medskip
We are now ready to proceed with the proof of Theorem~\ref{T1}, stated in the introduction.~Recall that the fraction of the perimeter represented by the shortest perimeter part cut by
a variable line through the centroid~$G$ of a triangle $ABC$ is a number~$m=w/(2s)$, where $0<m\leq 1/2$. The expression of $w$ in terms of the sides was computed in Theorem~\ref{T2}. Theorem~\ref{T1} asserts that the range of $m$ over all triangles is the interval $(3/10,4/9]$.

\begin{proof}[Proof of Theorem~\ref{T1}]
By Theorem~\ref{T2}(iii), the maximum of $m$ is 4/9 and this is achieved by the equilateral triangle and its  W-lines through $G$, which are the parallels from $G$ to the sides.

Focusing on the minimum, without loss of generality and up to similarity, we may assume that $a+b+c=1$ and $a\geq b\geq c>a-b$, where the last inequality is the triangle inequality. The substitution $a=1-b-c$ simplifies the quadruple inequality to\vspace{-.05in}
\begin{equation}\label{eq1}
(1-c)/2\geq b\geq c>1/2-b,
\end{equation}
and leaves the expression of $w=w_A$ in Theorem~\ref{T2} unchanged. We are then minimizing the function
\[
F(b,c)=w=\begin{cases}
b/3+c/3+2\sqrt{bc}/3,&\text{for }b\leq 4c,\\
c+b/2,&\text{for }b>4c,
\end{cases}
\]
over the closure of the region determined by the conditions \eqref{eq1} and shown in Figure~\ref{fig4}. The extremum values of $F$ over this region are obtained by standard multivariable calculus method, where the region is viewed as the union of its two sub-regions determined by the line $b=4c$ where the expression of $F$ changes. The observation that the partial derivative $F_b(b,c)$ is non-zero for all $(b,c)$ ensures no interior critical points for~$F$ in each subregion. One can easily prove that $F$ has no critical points on the six open segments bordering the two subregions, to deduce that the extremum values for~$F=w=m$ are corner point values. The evaluations of $F$ at the five corner points of the two subregions imply that $F$, hence $m$, has an unattained minimum~of~3/10 for~$(b,c)$ approaching $(4/10,1/10)$, that is, for $a:b:c$ approaching $5:4:1$; and we rediscover that~$m$ has a maximum of 4/9 at $(b,c)=(1/3,1/3)$, that is, for $a=b=c$.

By Theorem~\ref{T2}(i), when $b=4c$, the line through $G$ crossing $AB$ and $AC$ that achieves $w=w_A$ is the median from $B$, and this is a W-line relative to $A$.
\end{proof}
\begin{figure}[t]
\begin{center}
\begin{tikzpicture}[scale=12]\small
\draw [->] (-.04,.25) -- (.46,.25);\node [right] at (.46,.25) {$c,\quad (b=1/4)$};
\draw [->] (0,.21) -- (0,.54);\node [left] at (0,.53) {$b$};
\path [draw,fill=gray!30] (0,.5) -- (.25,.25) -- (.333,.333) -- (0,.5) -- cycle;
\draw [fill=black] (0,.5) circle [radius=.002in];\node [left] at (0,.5) {$(0,1/2)$};
\draw [fill=black] (.333,.333) circle [radius=.002in];\node [right] at (.333,.333) {$(1/3,1/3)$};
\draw [fill=black] (.25,.25) circle [radius=.002in];\node [below] at (.25,.25) {$(1/4,1/4)$};
\draw [fill=black] (.1,.4) circle [radius=.002in];\node [below left] at (.1,.4) {$(1/10,4/10)$};
\draw [fill=black] (.111,.444) circle [radius=.002in];\node [above right] at (.111,.444) {$(1/9,4/9)$};
\draw (.1,.4) -- (.111,.444);
\end{tikzpicture}\vspace{-.1in}
\end{center}
\caption{The region in the proof of Theorem~\ref{T1}.}\label{fig4}\vspace{-.1in}
\end{figure}
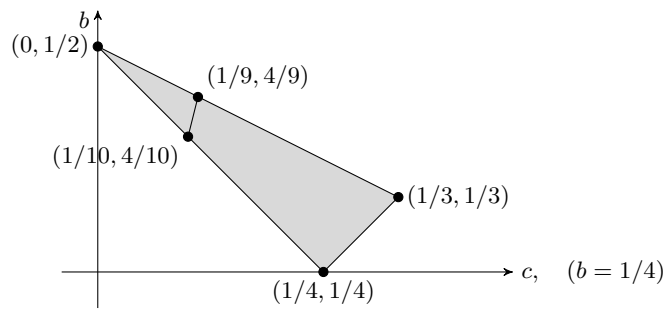

The proof of Theorem~\ref{T1} achieved the stated goal of the article. We close this study with a natural conjecture predicting that the lower bound of 3/10 in Theorem~\ref{T1} extends from triangles to planar convex sets, while clearly the upper bound of 4/9 does not.

\begin{PWC}
The fraction of the perimeter represented by the shortest perimeter part cut by a variable line through the centroid of a compact planar convex set is a number between 3/10 and 1/2. The bound 3/10 is approached by scales of triangles approaching the 5-4-1 degenerate triangle and their medians corresponding to the middle sides. The bound 1/2 is achieved by each line through the centroid of a centrally symmetric convex set.
\end{PWC}


\bibliographystyle{plain}

\begin{thebibliography}{12}

\bibitem{B} Blaschke, W. (1923) \textit{Vorlesungen \"uber Differentialgeometrie II. Affine Differentialgeometrie.} Berlin: Springer.

\bibitem{BCHM} Bose, P., Carmi, P., Hurtado, F., Morin, P. (2011). A generalized Winternitz theorem. \textit{J. Geom.} \textbf{100}: 29--35.

\bibitem{G} Gr\"umbaum, B. (1960). Partitions of mass distributions and of convex bodies by hyperplanes.
 \textit{Pacific J. Math.} \textbf{10}(4): 1257--1261.

\bibitem{N} Neumann, B. H. (1945). Partitions of mass distributions and of convex bodies by hyperplanes. \textit{J. Lond. Math. Soc.} \textbf{20}: 226--237.

\bibitem{SY} Shintar, A., Yaskin, V. (2021). A generalization of Winternitz's theorem and its discrete version. \textit{Proc. Amer. Math. Soc.} \textbf{149} (7): 3089--3104.

\bibitem{YB} Yaglom, I. M., Boltyanskii, V. G. (1960). \textit{Convex Figures}. Translated by Paul J. Kelly and Lewis F. Walton. New York, NY: Holt, Rinehart and Winston.
\end{thebibliography}


\end{document}